\documentclass[12pt, reqno, twoside, letterpaper]{amsart}

\usepackage{palindromes_Yu}
\usepackage{algorithm}
\usepackage{algpseudocode}

\title[Represent a natural number as the sum of palindromes in various bases]
{Represent a natural number as the sum of palindromes\break in various bases}
      
\author[Y.\ Gao]{Yu\ Gao}

\address{ACM honored class, 
         Shanghai Jiao Tong University, 
         Shanghai, China.}

\email{xyz2606@sjtu.edu.cn}
        
\date{\today}

\begin{document}

\begin{abstract}
It is shown that the set of palindromes is an additive basis for the natural numbers in any base. Specifically, we prove that every natural number can be expressed as the sum of $O(d)$ palindromes in base $d$.
\end{abstract}

\maketitle


\vskip1.8in

\section{Statement of result}

Let $\NN\defeq\{0,1,2,\ldots\}$ denote the set of natural numbers.
For any integer $d\geq2$,
every number $n\in\NN$ has a unique representation of the form
\begin{equation}
\label{eq:representation}
n=\sum_{j=0}^{l-1}10^j\delta_j,
\end{equation}
where each digit $\delta_j$ belongs to the digit set
$$
\cD\defeq\{0,1,2,\ldots,d-1\},
$$
and the leading digit $\delta_{l-1}$ is nonzero whenever $l\ge 2$.
We use
$$
n=\begin{tabular}{|c|c|c|c|}
\hline
$\delta_{l-1}$&$\cdots$&$\delta_1$&$\delta_0$\\
\hline
\end{tabular}
$$
represents the relation \eqref{eq:representation}. The integer $n$ is said to be a
\emph{palindrome} if its digits satisfy the symmetry condition
$$
\delta_j=\delta_{l-1-j}\qquad(0\le j<l).
$$
Denoting by $P_d$ the collection of all palindromes in $\NN$,
we are to show that $P_d$ is an additive basis for $\NN$ and for any $d\geq2$.

\begin{theorem}
\label{thm:main}
Every natural number is the sum of $O(d)$ palindromes in base $d$ for integer $d\geq2$.
\end{theorem}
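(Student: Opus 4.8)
The plan is to construct the palindromes directly, in a single pass, rather than recursively. A recursive scheme — replace $n$ by $n$ minus the palindrome obtained by reflecting the top half of its digit string, thereby roughly halving the number of digits — costs $\Theta(\log\log n)$ palindromes, which is weaker than the bound $O(d)$, uniform in $n$, that we want; so the argument has to match (most of) the digits of $n$ in one shot.

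The first step isolates the factor $d$. Write the base-$d$ digits of $n$ as $\delta_{L-1},\dots,\delta_0$. Using $\delta_j=\#\{v:1\le v\le\delta_j\}$, decompose $n=\sum_{v=1}^{d-1}M_v$, where $M_v=\sum_{j\,:\,\delta_j\ge v}d^j$ is a number all of whose base-$d$ digits lie in $\{0,1\}$. It therefore suffices to prove that every base-$d$ number whose digits are all $0$ or $1$ is a sum of at most some absolute constant $C$ of palindromes; this exhibits $n$ as a sum of at most $C(d-1)=O(d)$ palindromes. (For $d=2$ this reduction is vacuous and that base case is treated on its own.) The payoff of passing to such sparse numbers is that every column sum and every carry that arises in the construction below stays in $\{0,1,2\}$.

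For the core claim let $N$ be a sparse number with $L$ base-$d$ digits. I would look for palindromes $p_1,\dots,p_C$ — most of length $L$, with one or two short ones held in reserve for the central columns — whose digits are fixed column by column, starting at the two outermost columns $L-1$ and $0$ and working inward. Since each $p_i$ is a palindrome, its digit in column $j$ equals its digit in column $\mathrm{len}(p_i)-1-j$; reading the identity $\sum_i p_i=N$ in columns $L-1-t$ and $t$ simultaneously, the long palindromes contribute the same amount to both columns, so the discrepancy between $\delta_{L-1-t}$ and $\delta_t$ — at most $1$ in the sparse case — has to be absorbed by the still-free low-end digits and by a carry pushed inward. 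Near the middle the reserved short palindromes soak up the residual slack, and the induction on $t$ closes because every quantity in sight is bounded; the parity of $L$ (the lone central column when $L$ is odd) is a minor variant.

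The one genuine obstacle is precisely this bookkeeping of carries, of the same flavour as in the Cilleruelo--Luca--Baxter theorem that three palindromes always suffice (for bases at least $5$). Allowing $O(d)$ palindromes — equivalently $O(1)$ for each level set $M_v$ — is exactly what lets one run the carry analysis crudely and uniformly in $d$, sidestepping the delicate case distinctions the optimal constant would require. What remains is routine clean-up at the boundary: checking that each string produced really does represent a palindrome — in particular that its leading digit is nonzero, a symmetric string starting with $0$ representing no palindrome — disposing of the base $d=2$, and handling numbers with only a few digits directly.
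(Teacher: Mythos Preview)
Your level-set decomposition $n=\sum_{v=1}^{d-1}M_v$, with each $M_v$ having only digits $0$ and $1$, is correct and is genuinely different from the paper's route. The paper never passes to sparse numbers. Instead it repeatedly strips off, at step $k$, up to $2d$ numbers of the shape $c(d^{\,l-k-2}+d^{k})$, $c(d^{\,l-k-3}+d^{k})$, $c(d^{\,l-k-4}+d^{k})$, reducing a number in $\NN_{l-k,k}$ to one in $\NN_{l-k-1,k+1}$; summing the $j$-th piece over all $k$ then automatically produces a palindrome, because the two nonzero digits at positions $l-k-2$ and $k$ interlock into a symmetric string. A short preprocessing pass (Algorithm~2) forces the outermost pair to be nonzero, so these strings are honest palindromes rather than ones with leading zeros.

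The gap in your proposal is that you never actually prove the core claim that every $0/1$ number is a sum of $O(1)$ palindromes. You describe a column-by-column scheme and assert that ``the induction on $t$ closes because every quantity in sight is bounded,'' but you do not specify the number $C$ of palindromes, their lengths, how a shorter reserve palindrome absorbs a discrepancy at column $t$ without creating a new one at its own mirror column, or how you avoid a borrow in column $0$ when $\delta_0=0$ while the length-$L$ palindrome is forced to place a nonzero digit there. You yourself call this bookkeeping ``the one genuine obstacle'' and compare it to Cilleruelo--Luca--Baxter, but invoking that analogy is not a proof; the CLB argument is a long case analysis precisely because these carry issues are delicate. Worse, for $d=2$ your reduction is vacuous, so you need outright that every natural number is a sum of $O(1)$ binary palindromes---a theorem in its own right, not ``routine clean-up.'' As it stands the proposal is a plausible strategy whose load-bearing step is left as an assertion.
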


\cite{Ban} proved the decimal case using a different approach. Our construction is simpler and more generalized but it uses more palindromes than \cite{Ban}.

\section{The proof}
\label{sec:proof}

\subsection{Notation.}
\label{sec:notate}

We denote by $P_{d,l,k}$ the set of integers $\{c(d^{l-1}+d^k)|0\leq c<d,0\leq k<l-1\}$. 
Next we denote by $\NN _{l,k}$ the set of integers $\{xd^k|d^{l-1-k}\leq x<d^{l-k}\}$. 
In other words, $\NN _{l,k}$ is the set of natural numbers $n$ that have a $d$ base expansion of the form
$$
n=\begin{tabular}{|c|c|c|c|c|c|}
\hline
$\delta_{l-1}$&$\cdots$&$\delta_k$&$0$&$\cdots$&$0$\\
\hline
\end{tabular}
$$
with $\delta_{l-1}\neq 0$.

\subsection{Inductive passage from $\NN _{l,k}$ to $\NN _{l-1,k+1}$.}
\label{sec:induction}
\begin{lemma}
\label{lemma:2d}
Every number $n\in \NN _{l,k}$ is the sum of at most $2d$ numbers given by Algorithm~\ref{algorithm:algo1} from $P_{d,l,k}$ and some number $n'\in \NN _{l-1,k+1}$ for $l\geq k+6$.
\end{lemma}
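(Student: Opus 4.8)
The plan is to prove Lemma~\ref{lemma:2d} by an explicit digit-surgery argument, which is precisely the job of Algorithm~\ref{algorithm:algo1}. The first move is to pass to the homogeneous case $k=0$: dividing through by $d^k$ identifies $\NN_{l,k}$ with $d^k\NN_{l-k,0}$, identifies $\NN_{l-1,k+1}$ with $d^{k+1}\NN_{l-k-2,0}$, and identifies $P_{d,l,k}$ with $d^kP_{d,l-k,0}$, while $l\ge k+6$ turns into $l-k\ge 6$. So, after renaming $l-k$ as $l$, it suffices to treat $n\in\NN_{l,0}$ with $l\ge 6$; write the base-$d$ digits of $n$ as $\delta_{l-1},\dots,\delta_0$ with $\delta_{l-1}\ne 0$. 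Unwinding the definitions, $n'\in\NN_{l-1,1}$ says exactly that $n'$ has digit $0$ in positions $l-1$ and $0$ and a nonzero digit in position $l-2$; so the target is to exhibit at most $2d$ elements of $P_{d,l,0}$ whose sum $S$ is $\le n$ and for which $n-S$ has these three digit properties, and then $n=S+(n-S)$.

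For the construction, note that subtracting an element $c(d^{l-1}+d^{j})$ of $P_{d,l,0}$ decreases the digit in position $l-1$ by $c$ and the digit in position $j$ by $c$, up to a single borrow travelling upward from position $j$. I would subtract elements in two passes. The first pass uses at most $d$ of them to annihilate the leading digit $\delta_{l-1}$ and, at the same time, to drive the trailing digit down to $0$, using the freedom in the inner exponent $j$ (and, where necessary, several elements sharing a common $j$) to place the required reductions in the lower half of the digit string. The second pass uses at most $d$ more elements to repair whatever a borrow chain from the first pass did to the digit in position $l-2$ and to force that digit to be nonzero, so that the outcome really is in $\NN_{l-1,1}$. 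Keeping each pass of size $\le d$ yields the bound $2d$. The assumption $l\ge 6$ provides a cushion of untouched interior positions between the ``top'' region worked on by the first pass and the ``bottom'' region it uses to fix the units digit, so that the two passes act on disjoint stretches and no borrow chain can run all the way from position $0$ up to position $l-1$. It then remains to verify, case by case, that every partial sum stays $\le n$ (so we are genuinely subtracting numbers from $P_{d,l,0}$) and that the final digit string is as required.

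The step I expect to be the real obstacle is the control of borrows. A single subtraction from $P_{d,l,0}$ can trigger a long chain of borrows through an interior run of $0$'s (or, symmetrically, of $(d-1)$'s) in $n$, and such a chain can corrupt a digit already set --- most dangerously the digit we are building in position $l-2$ or the top digit in position $l-1$. The unpleasant configurations, such as $\delta_0>\delta_{l-1}$ or long interior runs of extreme digits, must be funnelled through the second pass without exceeding the quota of $2d$ elements; it is exactly to leave borrow chains somewhere to die and to keep the two passes from colliding that $l\ge k+6$ is assumed. Showing that the explicit recipe of Algorithm~\ref{algorithm:algo1} disposes of all of these within budget is where the bulk of the case analysis will be.
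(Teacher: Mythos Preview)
Your proposal misreads Algorithm~\ref{algorithm:algo1}. The algorithm does not subtract elements of the form $c(d^{l-1}+d^{j})$ with a free inner exponent $j$; it repeatedly subtracts the fixed number $x=(d-1)(d^{l-2}+d^{k})$, then repeatedly the fixed number $y=(d-1)(d^{l-3}+d^{k})$, and finally subtracts $z=c(d^{l-4}+d^{k})$ once, where $c$ is the current $k$-th digit. The bottom exponent is always $k$ and the top exponents are $l-2$, $l-3$, $l-4$, never $l-1$. Your two-pass plan, with freedom in $j$ used to ``place the required reductions in the lower half of the digit string,'' is therefore analysing a different procedure, and your reduction to $k=0$ (though harmless) is beside the point.

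More significantly, the borrow-chain case analysis you brace for is unnecessary: the paper's argument is a pure size computation and never tracks a single borrow. One checks that the first loop runs at most $d-1$ times (otherwise $n\ge d^{l-1}+dx\ge d^{l}$), that after it $d^{l-1}\le n<d^{l-1}+x$, that the second loop then runs at most $d$ times (otherwise $n\ge d^{l-1}+dy\ge d^{l-1}+x$), and that after it $d^{l-1}-y\le n<d^{l-1}$. Subtracting $z$ forces $\delta_k(n')=0$ by the choice of $c$, while the crude bound $n'\ge d^{l-1}-y-z$ together with $l\ge k+6$ gives $n'\ge (d-1)d^{l-2}\ge d^{l-2}$ by a two-line inequality. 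With $n'<d^{l-1}$ and the digits below position $k$ untouched, this already yields $n'\in\NN_{l-1,k+1}$. The hypothesis $l\ge k+6$ enters only in that last inequality, not to keep borrow chains apart; the ``real obstacle'' you anticipate simply does not arise.
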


\begin{algorithm}[htb]
		\caption{Inductive passage from $\NN _{l,k}$ to $\NN _{l-1,k+1}$.}
		\begin{algorithmic}[1]
			\Require an integer $n\in \NN _{l,k}$.
			\Ensure a multiset $S_k$ and the modified $n$.
			\State $S_k \gets \emptyset$
			\State $x \gets (d-1)d^{l-2}+(d-1)d^k$
			\While {$n\geq d^{l-1}+x$}
			\State $n\gets n-x$
			\State $S_k \gets S_k \cup \{x\}$
			\EndWhile
			\State $y \gets (d-1)d^{l-3}+(d-1)d^k$
			\While {$n\geq d^{l-1}$}
			\State $n\gets n-y$
			\State $S_k\gets S_k\cup\{y\}$
			\EndWhile
			\State $c\gets \delta_k(n)$
			\State $z\gets cd^{l-4}+cd^k$
			\State $S_k\gets S_k\cup\{z\}$
			\State $n\gets n-z$
			\State \Return {$S_k$ and $n$}
		\end{algorithmic}
	\label{algorithm:algo1}
\end{algorithm}
\begin{proof}
Since $\NN_{l,k}\subset\{x|d^{l-1}\leq x<d^l\}$, we have
\begin{lemma}
	\label{lem:firstloop}
	The first loop in Algorithm~\ref{algorithm:algo1} executes for at most $d-1$ times.
\end{lemma}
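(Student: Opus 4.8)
The plan is to unroll the first while-loop and count its iterations directly. Write $x = (d-1)d^{l-2} + (d-1)d^k$ for the fixed amount subtracted on each pass, and let $n_0$ be the value of $n$ on entry, so that $d^{l-1}\le n_0 < d^l$ by the inclusion $\NN_{l,k}\subset\{x\mid d^{l-1}\le x<d^l\}$ already recorded. Since each pass replaces the current value by that value minus $x$, after $i$ passes the variable holds $n_i = n_0 - ix$.

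Next I would identify the iteration count exactly. The loop performs at least $m$ passes if and only if the value seen at the start of the $m$-th pass still meets the guard, i.e. $n_{m-1} = n_0 - (m-1)x \ge d^{l-1} + x$, which rearranges to $n_0 - mx \ge d^{l-1}$, i.e. $mx \le n_0 - d^{l-1}$. Hence the total number of passes is $T = \lfloor (n_0 - d^{l-1})/x \rfloor$; in particular $T \le (n_0 - d^{l-1})/x$. (Here $x>0$ because $d\ge2$, and if the guard already fails on entry then $T=0$, consistent with the formula since $n_0\ge d^{l-1}$.)

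To finish I would feed in two crude bounds: $n_0 - d^{l-1} < d^l - d^{l-1} = (d-1)d^{l-1}$ from $n_0 < d^l$, and $x \ge (d-1)d^{l-2}$ obtained by discarding the nonnegative term $(d-1)d^k$. These give
\[
T \;\le\; \frac{n_0 - d^{l-1}}{x} \;<\; \frac{(d-1)d^{l-1}}{(d-1)d^{l-2}} \;=\; d,
\]
and since $T$ is a nonnegative integer, $T < d$ forces $T \le d-1$, as claimed.

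I do not expect a genuine obstacle here: the statement is essentially a one-line estimate once the loop is unrolled. The only points needing a moment's care are the off-by-one when matching the loop guard to the floor formula for $T$, and checking that weakening $x$ to $(d-1)d^{l-2}$ still leaves the quotient strictly below $d$ — this works precisely because the numerator bound $(d-1)d^{l-1}$ equals $d$ times $(d-1)d^{l-2}$. Note also that the stronger hypothesis $l\ge k+6$ of Lemma~\ref{lemma:2d} is not needed for this sublemma: $l\ge2$ (so that $d^{l-2}$ is an integer) already suffices, and the value of $k$ enters only through $d^k\ge0$, used to bound $x$ from below.
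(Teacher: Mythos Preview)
Your proof is correct and uses essentially the same estimate as the paper: the paper phrases it as the one-line contradiction ``if the loop executes at least $d$ times then $n\ge d^{l-1}+dx\ge d^l$,'' which is exactly your bound $T<(d-1)d^{l-1}/((d-1)d^{l-2})=d$ rewritten contrapositively. Your version is more explicit about the floor formula for $T$, but the underlying idea is identical.
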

\begin{proof}
	If the first loop executes for at least $d$ times, then $n\geq d^{l-1}+dx\geq d^l$.
\end{proof}
It's clear that $d^{l-1}\leq n<d^{l-1}+x$ after the first loop. Similarly, we have
\begin{lemma}
	\label{lem:secondloop}
	The second loop in Algorithm~\ref{algorithm:algo1} executes for at most $d$ times.
\end{lemma}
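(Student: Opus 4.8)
The plan is to combine the bound on $n$ that holds when the first loop exits with a one-line comparison of the two step sizes $x$ and $y$. First I would record the value of $n$ at the moment control passes to the second loop; call it $n_1$. Since $n\in\NN_{l,k}$ gives $n\ge d^{l-1}$ at the start, and since each pass through the first loop subtracts $x$ from a value satisfying $n\ge d^{l-1}+x$, the inequality $n\ge d^{l-1}$ is an invariant of the first loop; combined with the loop's exit condition (and with Lemma~\ref{lem:firstloop}, which guarantees the loop terminates) this should give
$$
d^{l-1}\le n_1<d^{l-1}+x,\qquad x=(d-1)d^{l-2}+(d-1)d^k .
$$

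Next I would turn the assertion into an arithmetic inequality. The second loop subtracts $y=(d-1)d^{l-3}+(d-1)d^k$ and runs precisely while $n\ge d^{l-1}$, so immediately before its $j$-th iteration the running value equals $n_1-(j-1)y$, and since this is decreasing in $j$ the loop performs at least $d+1$ iterations if and only if $n_1-dy\ge d^{l-1}$, i.e. $n_1\ge d^{l-1}+dy$. By the bound on $n_1$ just obtained, it is therefore enough to prove $x\le dy$, which is the key (and essentially only) computation:
$$
dy-x=\bigl((d-1)d^{l-2}+(d-1)d^{k+1}\bigr)-\bigl((d-1)d^{l-2}+(d-1)d^k\bigr)=(d-1)^2d^k\ge 0 ,
$$
using $d\ge 2$. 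Hence $n_1<d^{l-1}+x\le d^{l-1}+dy$, a $(d+1)$-st iteration of the second loop is impossible, and the loop runs at most $d$ times.

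I do not anticipate a genuine obstacle here. The one point that needs care is the bookkeeping of which value of $n$ each inequality refers to: all of the relevant inequalities concern $n_1$, the value entering the second loop, not the original input, and one must verify that the first loop leaves $n$ in the half-open interval $[d^{l-1},\,d^{l-1}+x)$ rather than ever dropping it below $d^{l-1}$ — which is exactly the invariant argument in the first step. It is also worth noting that this sub-lemma uses only $d\ge 2$ (together with $l\ge 3$, so that $y$ is an integer); the stronger hypothesis $l\ge k+6$ of Lemma~\ref{lemma:2d} will only be needed later, for the final step $z\gets cd^{l-4}+cd^k$ and to ensure that the multiset produced by Algorithm~\ref{algorithm:algo1} really consists of base-$d$ palindromes.
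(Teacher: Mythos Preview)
Your argument is correct and is exactly the paper's proof written out in more detail: the paper records that after the first loop $d^{l-1}\le n<d^{l-1}+x$, then argues that a $(d{+}1)$-st iteration of the second loop would force $n\ge d^{l-1}+dy\ge d^{l-1}+x$, which is precisely your computation $dy-x=(d-1)^2d^k\ge 0$.
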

\begin{proof}
	If the second loop executes for at least $d+1$ times, then $n\geq d^{l-1}+dy\geq d^{l-1}+x$.
\end{proof}
Thus $|S_k|\leq2d$ for the returned $|S_k|$. To complete the proof, it suffices to show that the returned $n$ by Algorithm~\ref{algorithm:algo1} is in $N_{l-1,k+1}$. $\delta_k(n) = 0$ is satisfied by the choice of $c$. By the end condition of the second loop, 
\begin{align*}
d^{l-1}>n
&\geq d^{l-1}-y-z\\
&\geq d^{l-1}-((d-1)d^{l-3}+(d-1)d^k)-((d-1)d^{l-4}+(d-1)d^k)\\
&=d^k(dd^{l-k-2}-((d-1)d^{l-k-3}+(d-1)d^{l-k-4}+d+(d-2)))\\
&\geq d^k((d-1)d^{l-k-2})\\
&=(d-1)d^{l-2}\\
&\geq d^{l-2}.
\end{align*}
\end{proof}
\subsection{Pseudo-Theorem 1.}
\begin{lemma}
	\label{lemma:2k}
	If $n$ is a natural number with at most $k$ nonzero $d$ base digits, then $n$ is the sum of $2k$ palindromes in base $d$.
\end{lemma}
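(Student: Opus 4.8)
The plan is to reduce the statement to a one–term estimate. Writing $n=\sum_j\delta_j d^j$ in base $d$ and letting $p_1<p_2<\cdots<p_m$ (with $m\le k$) be the positions of the nonzero digits, we have $n=\sum_{i=1}^m\delta_{p_i}d^{p_i}$, so it suffices to show that for each $i$ the single term $\delta_{p_i}d^{p_i}$ (with $1\le\delta_{p_i}\le d-1$) is a sum of at most two palindromes in base $d$. Granting this, summing the $m$ decompositions gives $n$ as a sum of at most $2m\le 2k$ palindromes, and appending copies of the palindrome $0$ makes the count exactly $2k$; there is no interaction between the pieces, since we are merely adding integers. Note that Lemma~\ref{lemma:2d} is not needed here — what is being controlled is the number of nonzero digits, not the width of the frame.

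For the one–term estimate I would split into three cases, writing $p$ and $\delta$ for $p_i$ and $\delta_{p_i}$. If $p=0$ then $\delta d^p=\delta$ is a single digit, hence a palindrome. If $p\ge1$ and $\delta=1$, use the carry identity $d^p=(d^p-1)+1$: the number $d^p-1=\underbrace{(d-1)(d-1)\cdots(d-1)}_{p\text{ digits}}$ is a repunit–type palindrome and $1$ is a single digit, so $d^p$ is a sum of two palindromes.

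The remaining case $p\ge1$, $2\le\delta\le d-1$ is the only one needing a genuine computation, and I regard it as the crux. Consider the palindrome
\[
Q=\underbrace{\,(\delta-1)\;(d-1)\;(d-1)\;\cdots\;(d-1)\;(\delta-1)\,}_{p+1\text{ digits}},
\]
whose middle block of $(d-1)$'s is empty when $p=1$; explicitly $Q=(\delta-1)(d^p+1)+(d-1)\bigl(d+d^2+\cdots+d^{p-1}\bigr)$. Summing the finite geometric series gives $(d-1)(d+\cdots+d^{p-1})=d^p-d$, hence $Q=\delta d^p-(d-\delta+1)$. Since $2\le\delta\le d-1$ forces $2\le d-\delta+1\le d-1$, the difference $d-\delta+1$ is a single nonzero digit, hence a palindrome, and therefore $\delta d^p=Q+(d-\delta+1)$ displays $\delta d^p$ as a sum of two palindromes.

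Finally I would record the routine checks: each exhibited palindrome has nonzero leading digit (so it is a genuine base-$d$ representation), $0$ qualifies as a palindrome under the definition given above (so the padding is legitimate), and when $d=2$ only $\delta=1$ can occur, so the third case is vacuous and no boundary case is missed. I anticipate no serious obstacle; the only computation of substance is the closed form for $Q$.
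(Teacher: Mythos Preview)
Your proof is correct and follows essentially the same approach as the paper: both split each monomial $\delta d^{p}$ (with $p\ge 1$) as $(d-\delta+1)+\bigl(\delta d^{p}-(d-\delta+1)\bigr)$ for $\delta\ge 2$ and as $1+(d^{p}-1)$ for $\delta=1$, the second summand being the palindrome $(\delta-1)(d-1)\cdots(d-1)(\delta-1)$. The paper packages this via an auxiliary function $f$ rather than an explicit case split, but the two palindromes produced are identical; if anything your treatment of the $p=0$ case is cleaner, since the paper's formula does not single it out.
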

\begin{proof}
	Let $f$ be the function as following:
\begin{equation}
	f(x)\defeq\begin{cases}
	0&\quad\hbox{if $x=0$};\\
	1&\quad\hbox{if $x=1$};\\
	d-x+1&\quad\hbox{if $x=1,2,\ldots,d-1$}.
	\end{cases}
\end{equation}
So \begin{align*}
	n
	&=\sum_k{d^k\delta_k}\\
	&=\sum_k{f(\delta_k)+(d^k\delta_k-f(\delta_k))}.
\end{align*}
The proof is completed by the observation that $f(\delta_k)$ and $d^k\delta_k-f(\delta_k)$ are both palindromes.
\end{proof}
\begin{lemma}
	\label{lemma:pseudo}
	(Pseudo-Theorem 1)Every number is the sum of $O(d)$ pseudo-palindromes in base $d$ for $d\geq2$.
\end{lemma}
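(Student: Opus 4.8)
The plan is to use the inductive passage of Lemma~\ref{lemma:2d} to strip $n$ down to a number with only $O(1)$ nonzero digits and then finish with Lemma~\ref{lemma:2k}, being careful that the stripped-off blocks re-group into just $O(d)$ pseudo-palindromes. If $n$ has at most $5$ nonzero base-$d$ digits we are done by Lemma~\ref{lemma:2k} (at most $10$ palindromes), so assume $n$ has $l\ge 6$ digits, i.e. $n\in\NN_{l,0}$. Apply Lemma~\ref{lemma:2d} repeatedly along $\NN_{l,0}\to\NN_{l-1,1}\to\cdots\to\NN_{l-J,J}$, stopping at the largest $J$ for which its hypothesis $l-J\ge J+6$ is still available; then $J=\lfloor(l-6)/2\rfloor$ and the final remainder $n'\in\NN_{l',k'}$ has $l'-k'=l-2J-2\in\{4,5\}$, so $n'$ has at most $5$ nonzero digits and contributes at most $10$ palindromes via Lemma~\ref{lemma:2k}. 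At step $i$ record the multiset $S_i$ produced by Algorithm~\ref{algorithm:algo1}: at most $d-1$ copies of $x_i=(d-1)(d^{\,l-i-2}+d^i)$, at most $d$ copies of $y_i=(d-1)(d^{\,l-i-3}+d^i)$, and one $z_i=c_i(d^{\,l-i-4}+d^i)$.

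Naively $\bigcup_i S_i$ has $O(dl)$ members, far too many; the crux is that it amalgamates into $O(d)$ pseudo-palindromes. Every block has the form $c\,d^{\mathrm{hi}}+c\,d^{\mathrm{lo}}$ with $\mathrm{lo}=i$ and $\mathrm{hi}\in\{l-i-2,l-i-3,l-i-4\}$, and within a fixed type the exponents $\mathrm{hi}$ and $\mathrm{lo}$ have constant sum ($l-2,\,l-3,\,l-4$ respectively). So, letting $a_i$ (resp. $b_i$) be the number of $x_i$'s (resp. $y_i$'s) subtracted at step $i$, the whole $x$-contribution is
\[
\sum_{i=0}^{J}a_ix_i=(d-1)\sum_{i=0}^{J}a_i\bigl(d^{\,l-2-i}+d^{\,i}\bigr),
\]
where the inner sum is a \emph{single} pseudo-palindrome $P_a$: its nonzero digits are the values $a_i<d$ placed in the low band $\{0,\dots,J\}$ and in the mirror-image high band $\{l-2-J,\dots,l-2\}$, these two bands are disjoint because the slack in Lemma~\ref{lemma:2d} forces $2J+2<l$, so there are no carries, and the incidence $i\leftrightarrow l-2-i$ is exactly the symmetry of a length-$(l-1)$ string (a power of $d$ times a genuine palindrome when $a_0=0$). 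Hence the $x$-blocks from all steps together are merely $d-1$ copies of $P_a$. The $y$-contribution is $(d-1)$ copies of a palindrome in the same way, after splitting off the indicator ``$b_i=d$'' so that all coefficients are $<d$ (this costs at most $d-1$ more pseudo-palindromes, again one palindrome repeated); and the $z$-contribution $\sum_i c_i(d^{\,l-4-i}+d^i)$ is itself carry-free and palindromic, hence a single pseudo-palindrome. Collecting, $n$ is the sum of at most $(d-1)+2(d-1)+1+10=3d+8=O(d)$ pseudo-palindromes.

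The real work lies in making the amalgamation watertight. One must check that over all $\Theta(l)$ steps the overlaid low and high bands never collide and never carry — this is exactly where $l\ge k+6$ together with the stopping rule pay off, guaranteeing $2J<l$ with room to spare for each of the three types — and one must confirm that $P_a$, its $y$-analogue, and the $z$-block each really is a \emph{pseudo}-palindrome in the required sense, which forces a short case distinction on leading digit versus trailing zeros. The remaining points are routine: handling the small cases $l\le 5$ by Lemma~\ref{lemma:2k} directly, dealing with the coefficient $b_i=d$ as above, and carrying the exponents $l-i-2,\,l-i-3,\,l-i-4$ of Algorithm~\ref{algorithm:algo1} correctly through the induction (note these differ from the $d^{l-1}+d^k$ template defining $P_{d,l,k}$).
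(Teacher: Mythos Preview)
Your approach is essentially the paper's: iterate Lemma~\ref{lemma:2d} until only $O(1)$ nonzero digits remain, then combine the stripped blocks across all steps into $O(d)$ pseudo-palindromes because the high/low exponent bands are disjoint and mirror-symmetric, finishing the remainder with Lemma~\ref{lemma:2k}. The only real difference is in how you amalgamate: the paper tags the $j$th subtraction at each step and forms $\sum_k s_{k,j}$, $\sum_k t_{k,j}$, $\sum_k r_k$ (digits all in $\{0,d-1\}$ or $\{0,c\}$), giving exactly $2d$ pseudo-palindromes without any $b_i=d$ special case; you instead factor out the common coefficient $d-1$ and take $d-1$ copies of a single $P_a$ (digits $a_i$), which forces you to split off the indicator $[b_i=d]$ and yields a slightly larger constant $3d+8$. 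Both groupings are correct and rest on the same carry-free/disjoint-band observation; the paper's slot-wise version is a touch cleaner. (Minor quibble: your chain notation stops at $\NN_{l-J,J}$ but your computation $l'-k'=l-2J-2$ places the remainder in $\NN_{l-J-1,J+1}$; the latter is what you want and what your displayed sum $\sum_{i=0}^{J}$ already reflects.)
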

\begin{proof}
	A pseudo-palindrome here is a palindrome with possibly leading zeros.
	For any number $n\in\NN_{l,0}$, using Algorithm~\ref{algorithm:algo1} repeatedly, we have a family of sets $\{S_k|l-k\geq k+6\}$. $S_k$ consists at most $(d-1)$ numbers in $P_{l-k-1,k}$(denoted by $s_{k,j},0\leq j<d$), $d$ numbers in $P_{l-k-2,k}$(denoted by $t_{k,j},0\leq j\leq d$), and 1 number in $P_{l-k-3,k}$(denoted by $r_k$). (We add zeros in case of $|S_k|<2d$.) Notice that $s_{k,j}$ only have two nonzero $d$ base digits, namely $\delta_{l-k-2}$ and $\delta_k$. Considering the form of d-base expansions, 
	$$
	\sum_k{s_{k,j}}=\begin{tabular}{|c|c|c|c|c|}
		\hline
		$\delta_{l-2}(s_{0,j})$&$\delta_{l-3}(s_{1,j})$&$\ldots$&$\delta_1(s_{1,j})$&$\delta_0(s_{0,j})$\\
		\hline
	\end{tabular}
	$$
	is a pseudo-palindrome; so are $\sum_k{t_{k,j}}$  and $\sum_k{r_k}$ . Thus $n$ is the sum of $2d$ pseudo-palindromes and a number $m$ in $\NN_{l',l''}$ for $l'-l''\leq5$. By Lemma~\ref{lemma:2k}, $m$ is also a sum of at most $10$ palindromes.
\end{proof}
\subsection{Proof of Theorem~\ref{thm:main}.}
The only ``bug'' in Pseudo-Theorem 1(Lemma~\ref{lemma:pseudo}) is that the $2d$ pseudo-palindromes may have leading zeros when $0\in S_0$(i.e. when some of $\delta_{l-2}(s_{0,j})$, $\delta_{l-3}(t_{0,j})$ and $\delta_{l-4}(r_j)$ are equal to zero). So we must reduce a natural number $n$ to some number with $0\notin S_0$ in advance.
\begin{lemma}
	\label{lemma:preprocessing}
	Let $n\in\NN_{l,0}$ for $l\geq8$. Then n is the sum of $O(d)$ palindromes produced by Algorithm~\ref{algorithm:algo2} and an natural number $f(n)$ of the form $(d-1)d^{l-2}+(d-1)d^{l-3}+(d-1)d^{l-4}+m$ for $d^2-2d\leq m<d^{l-4}$.
\end{lemma}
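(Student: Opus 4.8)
The goal is to absorb, into a bounded number of genuine palindromes, the difference between $n$ and a target number $f(n)$ whose top three digits (positions $l-1,l-2,l-3$) are forced to be $d-1$ — this is exactly the condition that guarantees $0\notin S_0$ when Algorithm~\ref{algorithm:algo1} is subsequently applied, since a top digit of $d-1$ leaves room for the inner loops to run without the leading coefficients collapsing to zero. So I would design Algorithm~\ref{algorithm:algo2} to mimic the structure of Algorithm~\ref{algorithm:algo1}: repeatedly subtract a fixed full-length palindrome of the shape $(d-1)d^{l-1}+(d-1)d^0$, or $(d-1)d^{l-2}+(d-1)d^1$, etc., to drive $n$ down toward the window $[f(n),\,f(n)+d^{l-4})$, while each subtracted quantity is a palindrome (not merely a pseudo-palindrome) because it is supported symmetrically at the very ends of the digit string and the leading digit is nonzero.

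The plan is, first, to peel off the top digit: while $n$ is large, subtract the palindrome $(d-1)(d^{l-1}+1)$; by the same counting argument as Lemma~\ref{lem:firstloop} this runs at most $d$ times and afterwards the digit $\delta_{l-1}(n)$ equals $d-1$ (or $n$ has dropped to length $l-1$, a case we handle separately or exclude by the hypothesis $l\ge 8$). Second, with the top digit pinned at $d-1$, subtract the palindrome $(d-1)(d^{l-2}+d)$ repeatedly to force $\delta_{l-2}(n)=d-1$ without disturbing $\delta_{l-1}$; again at most $d$ iterations, by a bound of the form "if it ran $d+1$ times, $n$ would have dropped below $d^{l-1}$," which the margin computation in the proof of Lemma~\ref{lemma:2d} shows cannot happen once we also track the low-order borrow. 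Third, repeat once more with $(d-1)(d^{l-3}+d^2)$ to pin $\delta_{l-3}(n)=d-1$. At that point $n$ has the form $(d-1)d^{l-2}+(d-1)d^{l-3}+(d-1)d^{l-4}+\cdots$ — wait, we have only pinned three digits $l-1,l-2,l-3$; to match the stated form with the $d^{l-2},d^{l-3},d^{l-4}$ coefficients I would instead index the three peeling stages at positions $l-2,l-3,l-4$ (the number $n\in\NN_{l,0}$ already has $\delta_{l-1}\ne 0$, but the construction wants the next three digits saturated), subtracting the palindromes $(d-1)(d^{l-1}+1)$ no — rather $(d-1)(d^{l-2}+d^1)$ type objects whose outer digit sits at $l-1$ is impossible since $l-1$ is the true top; so the cleanest route is to first add nothing and note $\NN_{l,0}$ gives a fixed top digit $\delta_{l-1}\ge 1$, then subtract multiples of $(d-1)(d^{l-2}+d)$, $(d-1)(d^{l-3}+d^2)$, $(d-1)(d^{l-4}+d^3)$ in three stages, each running $O(d)$ times and each a palindrome because its leading digit $d-1\ne 0$. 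After the three stages the three digits $\delta_{l-2},\delta_{l-3},\delta_{l-4}$ all equal $d-1$, and tracking the accumulated low-order borrows (each stage contributes a $(d-1)d^{\,j}$ term with $j\in\{1,2,3\}$, summing to at most $(d-1)(d+d^2+d^3)\le d^4$) shows the residue $m$ — the portion below $d^{l-4}$ — satisfies $d^2-2d\le m<d^{l-4}$, the lower bound coming from the same telescoping margin estimate as in Lemma~\ref{lemma:2d} and the upper bound being automatic.

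The count of palindromes produced is $3\cdot O(d)=O(d)$, matching the claim, and all of them are true palindromes since their leading digit is $d-1\ge 1$ and they are symmetric by construction; the leftover is exactly $f(n)=(d-1)d^{l-2}+(d-1)d^{l-3}+(d-1)d^{l-4}+m$ with $m$ in the stated range, and crucially $m\ge d^2-2d$ ensures that when Lemma~\ref{lemma:pseudo}'s machinery is applied to $f(n)$ the relevant low digits $\delta_k(s_{0,j})$ etc. are large enough that $0\notin S_0$.

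The main obstacle I anticipate is the bookkeeping of borrows across the three stages: subtracting a palindrome like $(d-1)(d^{l-3}+d^2)$ can borrow into digit $l-2$, potentially undoing the saturation achieved in the previous stage, so the stages must be ordered and the loop bounds chosen so that the outer digit being subtracted is always strictly below the already-pinned block (hence no carry reaches it), and the lower bound $m\ge d^2-2d$ must be extracted from a margin inequality exactly analogous to the displayed chain in the proof of Lemma~\ref{lemma:2d}. I would isolate that inequality as the one genuinely computational step and verify it by the same "expand, collect powers of $d^k$, bound the bracket below by $(d-1)d^{\text{top}}$" technique used there; everything else is a routine adaptation of the loop-counting lemmas (Lemma~\ref{lem:firstloop}, Lemma~\ref{lem:secondloop}) already in hand.
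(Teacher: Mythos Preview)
Your proposal has a genuine gap: the numbers $(d-1)(d^{l-2}+d)$, $(d-1)(d^{l-3}+d^2)$, $(d-1)(d^{l-4}+d^3)$ that you settle on subtracting are \emph{not} palindromes. For instance $(d-1)(d^{l-2}+d)$ is an $(l-1)$-digit number whose digit string, read from position $l-2$ down to position $0$, is $d{-}1,0,\ldots,0,d{-}1,0$; the palindrome condition $\delta_j=\delta_{l-2-j}$ fails already at $j=0$, where $\delta_0=0$ but $\delta_{l-2}=d-1$. The same defect hits the other two. You began with the right object --- $(d-1)(d^{l-1}+1)$ \emph{is} a palindrome --- but your shift of the low end away from position $0$ destroys the symmetry. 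A second problem: your three-stage scheme leaves the original leading digit $\delta_{l-1}(n)\ge 1$ untouched, so the output cannot have the stated form $f(n)<d^{l-1}$.

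The paper's Algorithm~\ref{algorithm:algo2} avoids both issues by keeping the low end of every subtracted term at position~$0$: it uses the genuine palindromes $x=(d-1)(d^{l-2}+1)$, $y=(d-1)(d^{l-3}+1)$, $z=(d-1)(d^{l-4}+1)$, $w=(d-1)(d^{l-5}+1)$ in four successive loops (not three), with thresholds $d^{l-1}+x$, $d^{l-1}+y$, $d^{l-1}+z$, and finally $d^{l-1}$. There is no attempt to ``pin'' individual digits, and hence no borrow bookkeeping is needed at all: the loops simply drive $n$ into the interval $[d^{l-1}-w,\,d^{l-1})$, and the shape $(d-1)d^{l-2}+(d-1)d^{l-3}+(d-1)d^{l-4}+m$ with $0\le m<d^{l-4}$ is nothing more than the base-$d$ expansion of any integer in that interval. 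The lower bound then falls out of a single subtraction, $m\ge d^{l-4}-w=d^{l-5}-(d-1)\ge d^2-2d$ (using $l\ge 8$), with none of the telescoping you anticipated.
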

	
\begin{algorithm}[htb]
	\caption{Reduce to a proper initial value.}
	\begin{algorithmic}[1]
		\Require an integer $n\in \NN _{l,0}$.
		\Ensure a multiset $T$ and $f(n)$.
		\State $T \gets \emptyset$
		\State $x \gets (d-1)d^{l-2}+(d-1)$
		\While {$n\geq d^{l-1}+x$}
		\State $n\gets n-x$
		\State $T \gets T \cup \{x\}$
		\EndWhile
		\State $y \gets (d-1)d^{l-3}+(d-1)$
		\While {$n\geq d^{l-1}+y$}
		\State $n\gets n-y$
		\State $T\gets T\cup\{y\}$
		\EndWhile
		\State $z \gets (d-1)d^{l-4}+(d-1)$
		\While {$n\geq d^{l-1}+z$}
		\State $n\gets n-z$
		\State $T\gets T\cup\{z\}$
		\EndWhile
		\State $w \gets (d-1)d^{l-5}+(d-1)$
		\While {$n\geq d^{l-1}$}
		\State $n\gets n-w$
		\State $T\gets T\cup\{w\}$
		\EndWhile
		\State \Return {$T$ and $n$(as $f(n)$)}
	\end{algorithmic}
\label{algorithm:algo2}
\end{algorithm}
\begin{proof}
	As the analysis of Algorithm~\ref{algorithm:algo1}, we can easily show that $|T| = O(d)$ and $d^{l-1}-w\leq n<d^{l-1}$ after Algorithm~\ref{algorithm:algo2}.
\begin{align*}
m
&=n-(d-1)d^{l-2}-(d-1)d^{l-3}-(d-1)d^{l-4}\\
&\geq d^{l-4}-w\\
&=d^{l-5}-(d-1)\\
&\geq d^2-2d.
\end{align*}
\end{proof}
We are now ready to give a proof to Theorem~\ref{thm:main}.
If $n\leq d^8+1$, n is the sum of at most $16$ palindromes by Lemma~\ref{lemma:2k}.
Otherwise we use Algorithm~\ref{algorithm:algo1} on $f(n)$ produced by Algorithm~\ref{algorithm:algo2}. In Algorithm~\ref{algorithm:algo1}, both the first and the second loop executes for $(d-1)$ times for $d\geq3$, when the second loop executes for one more time for $d=2$. (The inequality $d^2-2d\leq m<d^{l-4}$ provides that we don't need to care about the trailing digits of $x$, $y$ and $z$ in Algorithm~\ref{algorithm:algo1}.) In the case of $d\geq3$, we modifies S by replacing one of the $y$ produced by the second loop of Algorithm~\ref{algorithm:algo1} with two nonzero palindromes $(d-2)d^{l-3}+(d-2)d^k$ and $d^{l-3}+d^k$. If $c=0$, we just decrease $n$ by $1$($1$ is a palindrome) before simulating Algorithm~\ref{algorithm:algo1}.
Combining the pieces we have $0\notin S_0$ and the pseudo-palindromes become genuine palindromes.


\begin{thebibliography}{9999}


\bibitem{Ban}
W.~D.~Banks,
``Every natural number is the sum of forty-nine palindromes,''
\emph{arXiv:1508.04721}.


\end{thebibliography}
\end{document}